\newtheorem{theorem}    {Theorem}[section] 
\newtheorem{lemma}      [theorem]{Lemma}
\newtheorem{proposition}[theorem]{Proposition}
\theoremstyle{definition}
\newtheorem{remark}     [theorem]{Remark}
\newtheorem*{rem}{Remark}
\theoremstyle{remark}
\newtheorem*{assumption}         {Assumption}
\numberwithin{equation}{section}
\newcommand\C{\mathbb C}
\newcommand\F{\mathbb F}
\newcommand\Q{\mathbb Q}
\newcommand\Z{\mathbb Z}
\newcommand{\Aut}{\operatorname{Aut}}
\newcommand{\disc}{\operatorname{disc}}
\newcommand{\Gal}{\operatorname{Gal}}
\newcommand{\Ker}{\operatorname{Ker}}
\renewcommand{\Re}{\operatorname{Re}}
\newcommand{\trace}{\operatorname{trace}}
\begin{document}

\title[Supersingular distribution on average]{Supersingular distribution on average\\ for congruence classes of primes}

\author[N. Walji]{Nahid Walji}
\address{Department of Mathematics MC 253-37, California Institute of Technology\\
Pasadena, California 91125, U.S.A.}
\email{nahid@caltech.edu}

\date{}

\begin{abstract}
A general question of interest is to know, for elliptic curves over a number field, the distribution of supersingular primes of degree one. We begin by modifying the Lang-Trotter heuristic to address the case of an abelian extension, then we show that it holds on average (up to a constant) for a family of elliptic curves by using ideas of David-Pappalardi. 
\end{abstract}

\maketitle

\section*{Introduction}
Given an elliptic curve $E$ defined over a number field $F$ and a prime $\mathfrak{p}$ of good reduction, let $a_{\mathfrak{p}}= N\mathfrak{p} + 1-|E(\mathbb{F}_{\mathfrak{p}})|$. One says that $\mathfrak{p}$ is \textit{supersingular} for $E$ if $a_{\mathfrak{p}} \equiv 0 \pmod{\mathfrak{p}}$. Given Hasse's bound of $2\sqrt{N \mathfrak{p}}$ on $|a_{\mathfrak{p}}|$, this occurs for degree one primes $\mathfrak{p}$ with $N\mathfrak{p}>3$ iff $a_{\mathfrak{p}}=0$.

In 1976, Lang and Trotter~\cite{LT76} conjectured the distribution of supersingular primes of norms $\leq x$ for a non-CM elliptic curve $E$ over $\Q$ to be asymptotic to
\begin{align*}
c_E \frac{\sqrt x}{\log x}
\end{align*}
as $x \rightarrow \infty$, for some positive constant $c_E$.

In 1981, Serre~\cite{Se81} proved that the density of supersingular primes is zero, which brings up the question of whether such primes are infinite in number for any $E/F$. This was answered for $F=\Q$ by N.Elkies in 1987~\cite{El87}, when he demonstrated that there is an infinite number of supersingular primes for any elliptic curve $E/\Q$. In 1989, Elkies~\cite{El89} extended his result to an elliptic curve $E$ over any number field with a real embedding.

No such result is known in general for other fields $K$, even in the case of imaginary quadratic fields. Some examples are provided by Elkies and Jao, in the cases where the absolute norm of $j(E)-1728$ has a prime factor congruent to $1 \pmod 4$ which has odd exponent~\cite{El87} and for elliptic curves parameterized by $X_0(p)/w_p$ for certain small odd primes $p$~\cite{J05} (which includes some elliptic curves with imaginary quadratic $j$-invariant). It does not seem that the result on $\Q$ can be used to deduce it for $K$, though this may be possible if one can show that infinitely many of the supersingular primes over $\Q$ have a degree one divisor over $K$.

This is certainly not always true, as in the case of an elliptic curve $E$ with a torsion subgroup $E(\Q)_{\text{tors}}$ of order 4 and the imaginary quadratic field $\Q(i)$. Recall the theorem on torsion injection which states that for sufficiently large primes $p$, we have $E(\Q)_{\text{tors}} \hookrightarrow E(\F_p)$. In our case this implies $a_p \equiv p+1 - |E(\F_p)| \equiv p+1 \pmod 4$ and so $p$ supersingular $\Rightarrow p \equiv 3 \pmod 4$ for
sufficiently large primes $p$. Thus $E$ does not have an infinite number of supersingular primes that split in $\Q(i)$.

In 1996, Fouvry and Murty~\cite{FM96} demonstrated that the Lang-Trotter asymptotic for supersingular primes held on average for a family of elliptic curves over $\Q$. David and Pappalardi~\cite{DP99} later established asymptotic expressions on average for any given trace of Frobenius, and also investigated the average distribution of supersingular inert primes of $\Q(i)$~\cite{DP04}. We will use their techniques to establish a result for a congruence class of primes, averaging over a family of elliptic curves. Such techniques were used in a similar manner by Kevin James in~\cite{Ja05}, however the aim of that paper is quite different from our goal here.

The surprise of the findings is that whilst the prediction is true up to a constant, the nature of the specific constant is not what is expected.

Fix $L$ to be an abelian extension and let $\pi_0(L,E_{a,b},x)$ be the number of rational primes less than $x$ that split (totally) in $L$ and are supersingular for $E_{a,b}$, where $E_{a,b}$ is the elliptic curve represented by the equation $Y^2=X^3+aX+b$. We will show, for $A,B > x^{1/2+\epsilon}$ and $AB > x^{3/2+\epsilon}$, that
\begin{align*}
\frac{1}{4AB} \sum_{|a|\leq A} \sum_{| b|\leq B} \pi_0(L,E_{a,b},x)
\sim C_L \frac{\sqrt{x}}{\log x} 
\end{align*}
as $x \rightarrow \infty$, where $C_L$ is an explicit positive constant that is addressed in section~\ref{s4}.
For example, if we set $L=\Q(\sqrt{-3})$, then 
\begin{align*}
\frac{1}{4AB} \sum_{|a|\leq A} \sum_{|b|\leq B} \pi_0 \left(\Q\left(\sqrt{-3}\right), E_{a,b}, x \right)
\sim \frac{\pi}{9} \frac{\sqrt{x}}{\log x}.
\end{align*}
\begin{rem}
Note that the constant above of $\pi/9$ is less than half of $C_\Q=\pi/3$, suggesting a slight \textit{bias} against the occurrence of supersingular primes that split in $\Q(\sqrt{-3})$. In section~\ref{s5}, we will discuss the existence of this bias in the averaging result. 
\end{rem}

We plan to investigate elsewhere the reasons for this bias, in part through the use of Galois representations.

We briefly describe the structure of the paper. In section~\ref{s1} we construct the heuristic, using a similar approach to that of Lang-Trotter. We then turn to averaging over a family of elliptic curves in section~\ref{s3}, expressing the average in terms of sums of Hurwitz numbers. We interpret these in section~\ref{s4} using $L$-functions and obtain an asymptotic expression. In section~\ref{s5} we discuss the results when applied to certain examples of number fields, and lastly in section~\ref{s6} we explain how to refine the averaging, using an idea of Fouvry-Murty.

\section{The heuristic} \label{s1}
\subsection{Background on Sato-Tate}
The heuristic will rely on a variant of the Sato-Tate conjecture. Given a non-CM elliptic curve $E$ defined over a number field $L$ of degree $n$ over $\Q$, let $\Sigma_L =\Sigma_{L,E}$ be the set of finite places of $L$ at which $E$ has good reduction. For $v \in \Sigma_L$ and $(l,\mathfrak{p}_v)=1$ (where $\mathfrak{p}_v$ is the prime associated to the place $v$), we have the related Frobenius conjugacy class in $\Aut(T_l(E))$. From page I-25 of~\cite{Se98} we have:
\begin{lemma}
 The eigenvalues of this Frobenius conjugacy class, when embedded into $\C$, are $\pi_v$ and $\overline{\pi_v}$, where $\pi_v = (Nv)^{1/2} e^{i\phi_v}$ for some $\phi_v \in [0,\pi]$.
 \end{lemma}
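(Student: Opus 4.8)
This lemma is in essence a repackaging of the Riemann hypothesis for the reduction of $E$, so the plan is to reduce to that. First I would set up the linear algebra: since $(l,\mathfrak{p}_v)=1$, the Tate module $T_l(E)=\varprojlim_n E[l^n]$ of the curve reduced at $v$ is free of rank two over $\Z_l$, the Frobenius at $v$ acts on it with a well-defined conjugacy class in $\Aut(T_l(E))$, and after fixing an embedding $\overline{\Q}_l\hookrightarrow\C$ we regard its two eigenvalues $\pi_v,\pi_v'$ as complex numbers. It then suffices to show $|\pi_v|=|\pi_v'|=(Nv)^{1/2}$ together with $\pi_v'=\overline{\pi_v}$.

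Next I would compute the characteristic polynomial. Writing $F$ for the $Nv$-power Frobenius endomorphism of $E\bmod v$, one has $F^2-a_vF+Nv=0$ on $T_l(E)$ with $a_v\in\Z$ \emph{independent of $l$}; indeed $\det(F\mid T_l(E))=\deg F=Nv$, while $a_v=\trace(F\mid T_l(E))=Nv+1-\#E(\F_v)$ because the $F$-fixed points are exactly the $\F_v$-rational points and $\deg$ agrees with $\det$ on $T_l(E)$. Hence $\pi_v+\pi_v'=a_v$ and $\pi_v\pi_v'=Nv$.

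Then I would invoke the Hasse bound $|a_v|\le 2\sqrt{Nv}$, i.e. $\disc(X^2-a_vX+Nv)=a_v^2-4Nv\le 0$. This forces $\pi_v,\pi_v'$ to be a complex-conjugate pair (or, in the boundary case $a_v^2=4Nv$, the repeated real value $\pm\sqrt{Nv}$); combined with $\pi_v\pi_v'=Nv>0$ this yields $|\pi_v|=|\pi_v'|=(Nv)^{1/2}$ and $\pi_v'=\overline{\pi_v}$. Taking $\phi_v\in[0,\pi]$ to be the argument with $\operatorname{Im}\pi_v\ge 0$ gives $\pi_v=(Nv)^{1/2}e^{i\phi_v}$ and $\overline{\pi_v}=(Nv)^{1/2}e^{-i\phi_v}$, as claimed.

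The sole non-formal ingredient is the Hasse bound, so that is the step where real work is needed if one wants a self-contained account: I would deduce it from the positivity of the Rosati-type involution $\alpha\mapsto\alpha^{\dagger}$ on $\operatorname{End}(E\bmod v)$, adjoint to the Weil pairing, applied to $m+nF$ for all $m,n\in\Z$, which gives $m^2+a_vmn+(Nv)n^2=\deg(m+nF)\ge 0$ and hence $a_v^2\le 4Nv$. The remaining ingredients — the rank-two module structure, the $l$-independence of $a_v$, and the passage to $\C$ — are purely formal.
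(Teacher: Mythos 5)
Your proof is correct. For this lemma the paper offers no argument of its own: it simply cites page I-25 of Serre's \emph{Abelian $l$-adic representations and elliptic curves}, where the statement is recorded as a consequence of Hasse's theorem. Your write-up is a self-contained version of exactly that standard argument: the characteristic polynomial $X^2-a_vX+Nv$ of Frobenius on $T_l(E)$ with $a_v\in\Z$ independent of $l$, the Hasse bound $a_v^2\le 4Nv$ forcing the roots to be a conjugate pair of absolute value $(Nv)^{1/2}$ (with the boundary case $a_v^2=4Nv$ correctly absorbed as $\phi_v\in\{0,\pi\}$), and the Hasse bound itself deduced from nonnegativity of $\deg(m+nF)=m^2+a_vmn+n^2Nv$ via the dual isogeny. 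So you have supplied the proof the paper outsources, by the same route the cited source takes; there is no genuinely different method here, only a difference in how much is made explicit.
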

We say that the \textit{Sato-Tate conjecture} holds for $\Sigma \subset \Sigma_L$ if the angles $\phi_v$, $v \in \Sigma$ of the Frobenius elements are equidistributed with respect to the measure $\frac{2}{\pi} \sin^2\phi \,d \phi$.
\begin{proposition}
 Consider the hypotheses:
\begin{enumerate}[\upshape (i)]
\item $\prod_{v \in \Sigma} (1-(Nv)^{-s})^{-1}$ converges for $\Re(s)>1$ and extends to a meromorphic function on $\Re(s) \geq 1$ with the only zero or pole in the region being a simple pole at $s=1$, \label{item1}
\item $L_m(s) = \prod_{v \in \Sigma} \prod_{a=0}^{m} (1-e^{i(m-2a)\phi_v} (Nv)^{-s})^{-1}$ extends to a non-zero holomorphic function on $\Re(s) \geq 1$ for all $m \geq 1$. \label{item2}
\end{enumerate}
If both~(\ref{item1}) and~(\ref{item2}) are true, then the Sato-Tate conjecture holds for $\Sigma$.
\end{proposition}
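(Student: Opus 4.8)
\emph{Proof proposal.} The plan is to verify the Weyl equidistribution criterion for the Sato--Tate measure $\mu_{\mathrm{ST}} = \tfrac{2}{\pi}\sin^2\phi\,d\phi$ on $[0,\pi]$. Put $\pi_\Sigma(x) = \#\{v\in\Sigma : Nv\le x\}$ and let $\mu_x = \pi_\Sigma(x)^{-1}\sum_{v\in\Sigma,\,Nv\le x}\delta_{\phi_v}$ be the empirical measures; we must show $\mu_x\to\mu_{\mathrm{ST}}$ weakly. The functions $U_m(\cos\phi)=\sin((m+1)\phi)/\sin\phi$, $m\ge 0$, form an orthonormal basis of $L^2([0,\pi],\mu_{\mathrm{ST}})$ and span the same subspace of $C([0,\pi])$ as the $\cos(k\phi)$, which is uniformly dense by Stone--Weierstrass. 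Since $\int_0^\pi U_m(\cos\phi)\,d\mu_{\mathrm{ST}}=\delta_{m,0}$ and each $\mu_x$ is a probability measure, it suffices to prove that
\begin{align*}
\sum_{v\in\Sigma,\ Nv\le x} U_m(\cos\phi_v) = o\bigl(\pi_\Sigma(x)\bigr)\qquad\text{for every }m\ge 1,
\end{align*}
after which $\mu_x\to\mu_{\mathrm{ST}}$ weakly, i.e.\ Sato--Tate for $\Sigma$, follows from the density of the $U_m$.

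The denominator is handled by hypothesis~(\ref{item1}). Writing $\zeta_\Sigma(s)=\prod_{v\in\Sigma}(1-(Nv)^{-s})^{-1}$, that hypothesis makes $-\zeta_\Sigma'/\zeta_\Sigma(s)$ holomorphic on $\Re(s)\ge 1$ except for a simple pole of residue $1$ at $s=1$. Feeding this into the Wiener--Ikehara Tauberian theorem gives $\sum_{(Nv)^k\le x}\log(Nv)\sim x$; discarding the prime-power ($k\ge 2$) terms, which contribute only $O(\sqrt{x}\log x)$, yields $\sum_{Nv\le x}\log(Nv)\sim x$, hence $\pi_\Sigma(x)\sim x/\log x$ by partial summation.

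For the numerator I would expand the Euler product in~(\ref{item2}). Using the identity $\sum_{a=0}^m e^{i(m-2a)\theta}=U_m(\cos\theta)$, one obtains
\begin{align*}
-\frac{L_m'}{L_m}(s) = \sum_{v\in\Sigma}\sum_{k\ge 1}\log(Nv)\,U_m\bigl(\cos(k\phi_v)\bigr)(Nv)^{-ks},
\end{align*}
which by~(\ref{item2}) is holomorphic on all of $\Re(s)\ge 1$; in particular $L_m$ contributes no pole at $s=1$. Because the coefficients $U_m(\cos(k\phi_v))$ are not sign-definite, Wiener--Ikehara does not apply to this series directly. The device is that $|U_m(\cos\theta)|\le m+1$, so the coefficients of $-(\zeta_\Sigma^{m+1})'/\zeta_\Sigma^{m+1}=-(m+1)\zeta_\Sigma'/\zeta_\Sigma$ dominate those of $\mp L_m'/L_m$; hence both $-(\zeta_\Sigma^{m+1}L_m)'/(\zeta_\Sigma^{m+1}L_m)$ and $-(\zeta_\Sigma^{m+1}/L_m)'/(\zeta_\Sigma^{m+1}/L_m)$ have non-negative Dirichlet coefficients, a simple pole of residue $m+1$ at $s=1$, and are otherwise holomorphic on $\Re(s)\ge 1$. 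Applying Wiener--Ikehara to each and subtracting yields $\sum_{v,\,k:\,(Nv)^k\le x}\log(Nv)\,U_m(\cos(k\phi_v))=o(x)$; discarding the $k\ge 2$ terms and applying partial summation then gives $\sum_{Nv\le x}U_m(\cos\phi_v)=o(x/\log x)=o(\pi_\Sigma(x))$, which is what was needed.

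The crux, and the step I expect to be the main obstacle, is precisely this passage from analytic continuation to a counting estimate: since $U_m(\cos\phi_v)$ changes sign, $-L_m'/L_m$ cannot be inserted into a Tauberian theorem as it stands, and one must manufacture the auxiliary products $\zeta_\Sigma^{m+1}L_m^{\pm 1}$ whose logarithmic derivatives do have non-negative coefficients. Two smaller points require care: fixing the branch of $\log L_m$ (normalised by $L_m(s)\to 1$ as $\Re(s)\to\infty$) so that the holomorphy assertion in~(\ref{item2}) is unambiguous, and using that ``holomorphic and non-vanishing'' in~(\ref{item2}) genuinely forbids a pole of $-L_m'/L_m$ on the line $\Re(s)=1$, which is exactly what makes the residues of the two auxiliary functions equal so that their main terms cancel in the subtraction.
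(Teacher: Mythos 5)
Your argument is correct and is precisely the one the paper invokes by citing pages I-22--I-26 of Serre's \emph{Abelian $l$-adic representations}: Weyl's criterion with the $SU(2)$-characters $U_m$, hypothesis~(i) plus Wiener--Ikehara for the denominator, and the positivity trick of passing to $\zeta_\Sigma^{m+1}L_m^{\pm 1}$ to handle the sign-changing coefficients of $-L_m'/L_m$. No discrepancy with the paper's (cited) proof.
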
 
\begin{proof}
Pages I-22 - I-26 of~\cite{Se98}.
\end{proof}

Given an abelian extension $L/\Q$ and assuming the Sato-Tate conjecture for $\Sigma_{L}$, it is possible to derive the Sato-Tate conjecture for the set of degree one primes of $L$ through the use of some straightforward complex analysis.

\subsection{Setup} \label{sbsec:setup}
Let $G=\Gal(\overline{L}/L)$. Given a non-CM elliptic curve $E/L$, we have a representation $\rho: G \rightarrow \prod_{l}{\rm GL}_2(\Z_l)$. Now fix a positive integer $M$ and reduce $\prod {\rm GL}_2(\mathbb{Z}_l)$
modulo $M$ so as to obtain the representation $\rho_{M}: G \rightarrow {\rm GL}_2(\mathbb{Z}/M\mathbb{Z})$. 

We choose an $M$ that \textit{splits} and \textit{stabilizes} the representation, which from Serre~\cite{Se72} we know always exists for a non-CM elliptic curve. For good primes $\mathfrak{p}$ of $E$ we have the trace $t_\mathfrak{p}$ of the Frobenius class $\sigma_\mathfrak{p}$, and it is the distribution of these $t_\mathfrak{p}$ for $\mathfrak{p} \in R$ that we consider, for a given set $R$ of primes in $L$.
\begin{assumption}
 $t_\mathfrak{p}$ can be considered as a random variable, independent of other $t_{\mathfrak{p}'}$, and its asymptotic behaviour is consistent with the Sato-Tate conjecture and the Tchebotarev density theorem.
 \end{assumption} 
Let $G(M) = G / \Ker \rho_{M}$, $G(M)_t=\{\sigma \in G(M) \mid \trace(\sigma) \equiv t \pmod M \}$ and $F_M(t) = M |G(M)_t|/|G(M)|$. Note that $\frac{1}{M} \sum_{t \pmod M} F_M(t) = 1$, i.e. the average value of $F_M(t)$ is 1.
Define $\xi(t,\mathfrak{p}) = t / 2\sqrt{N\mathfrak{p}}$ so that, by Hasse's inequality, $\xi \in [-1,1]$ and
thus we can write $\xi(t,\mathfrak{p})= \cos \theta (t,\mathfrak{p})$, with $\theta(t,\mathfrak{p}) \in [0,\pi]$. By the Sato-Tate result, we have a density function of $\frac{2}{\pi} \sin^2 \theta$, and this gives us a density function for $\xi$ of $g(\xi) = \frac{2}{\pi} \sqrt{1-\xi^2}$ on $[-1,1]$.

\subsection{Approximation model} \label{sbsec:am}
We now construct an approximation model that is slightly different to that of Lang-Trotter, where we also account for the torsion of the elliptic curve. We then determine the asymptotic behaviour.

Let $k$ be the order of the torsion in the Mordell-Weil group of the elliptic curve. One knows that for sufficiently large primes $\mathfrak{p}$ we have $|E(\F_\mathfrak{p})| \equiv 0 \pmod k$. 
Let $f_M(t,\mathfrak{p},k)= \text{prob}_M\{a_\mathfrak{p} \equiv t \pmod M\text{ and } a_\mathfrak{p} \equiv N\mathfrak{p}+1 \pmod k\}$, and define $h(t,\mathfrak{p},k)$ to be equal to $k$ if $t \equiv N\mathfrak{p}+1 \pmod k$ and $0$ otherwise.
\begin{remark}
Note that we require $\sum_{t}f_M(t,\mathfrak{p},k)=1$.
 \end{remark} 
The main assumption of the heuristic is that we have
\begin{align*}
f_M(t,\mathfrak{p},k) = c_\mathfrak{p} \cdot g(\xi(t,\mathfrak{p})) \cdot F_M(t) \cdot h(t,\mathfrak{p},k).
\end{align*}
\begin{remark}
 The $F_M(t)$ is for consistency with the Tchebotarev density theorem, $g(\xi(t,\mathfrak{p}))$ is to ensure
 compatibility with the Sato-Tate result (via the law of large numbers), $c_\mathfrak{p}$ is chosen so that $\sum_t f_M(t,\mathfrak{p},k)=1$, and lastly $h(t,\mathfrak{p},k)$ accounts for the congruence obstruction from torsion injection.
\end{remark}
 We now address the asymptotic behaviour of $c_\mathfrak{p}$.
\begin{lemma}
If $\sum_t f_M(t,\mathfrak{p},k)=1$, then
\begin{align*}
c_\mathfrak{p} \sim
\frac{1}{2\sqrt{N\mathfrak{p}}}    
\end{align*}
as $N\mathfrak{p} \rightarrow \infty$.
\end{lemma}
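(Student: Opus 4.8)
The plan is to read $c_\mathfrak{p}$ off the normalization constraint and then evaluate the resulting sum as a weighted Riemann sum. Imposing $\sum_t f_M(t,\mathfrak{p},k) = 1$ on the formula $f_M(t,\mathfrak{p},k) = c_\mathfrak{p}\,g(\xi(t,\mathfrak{p}))\,F_M(t)\,h(t,\mathfrak{p},k)$ gives
\begin{align*}
c_\mathfrak{p} = \Biggl(\ \sum_{|t|\le 2\sqrt{N\mathfrak{p}}} g\!\left(\tfrac{t}{2\sqrt{N\mathfrak{p}}}\right) F_M(t)\, h(t,\mathfrak{p},k)\Biggr)^{-1},
\end{align*}
the sum being effectively finite because $g$ is supported on $[-1,1]$. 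So it is enough to show that the inner sum, call it $\Sigma(\mathfrak{p})$, satisfies $\Sigma(\mathfrak{p}) = 2\sqrt{N\mathfrak{p}} + O(1)$ as $N\mathfrak{p}\to\infty$.

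First I would isolate the arithmetic part. Set $w(t) := F_M(t)\,h(t,\mathfrak{p},k)$; for fixed $\mathfrak{p}$ this is periodic in $t$ with period dividing $\ell := \mathrm{lcm}(M,k)$ (note $h$ depends on $\mathfrak{p}$ only through the residue of $N\mathfrak{p}+1$ modulo $k$), and $|w|$ is bounded independently of $\mathfrak{p}$. The key claim is that the mean of $w$ over a period equals $1$, i.e.\ $\sum_{r \bmod \ell} w(r) = \ell$. This combines the fact, already noted, that $\frac1M\sum_{t\bmod M} F_M(t)=1$ with the elementary observation that $\frac1k\sum_{t\bmod k} h(t,\mathfrak{p},k) = \frac1k\cdot k = 1$, together with the independence of the congruence conditions modulo $M$ and modulo $k$, so that the mean of the product factors as the product of the means.

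Next I would carry out the Riemann-sum estimate. Grouping the terms of $\Sigma(\mathfrak{p})$ by the residue class of $t$ modulo $\ell$,
\begin{align*}
\Sigma(\mathfrak{p}) = \sum_{r \bmod \ell} w(r) \sum_{\substack{|t|\le 2\sqrt{N\mathfrak{p}}\\ t\equiv r\,(\ell)}} g\!\left(\tfrac{t}{2\sqrt{N\mathfrak{p}}}\right).
\end{align*}
Since $g(\xi)=\tfrac{2}{\pi}\sqrt{1-\xi^2}$ is continuous and of bounded variation on $[-1,1]$ and vanishes at $\pm1$, the inner sum is a Riemann sum with mesh $\ell/(2\sqrt{N\mathfrak{p}})$, hence equals $\tfrac{2\sqrt{N\mathfrak{p}}}{\ell}\int_{-1}^{1} g(\xi)\,d\xi + O(1)$ uniformly in $r$; and $\int_{-1}^{1}\tfrac{2}{\pi}\sqrt{1-\xi^2}\,d\xi = 1$. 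Summing over the $\ell$ residue classes and invoking $\sum_{r\bmod \ell} w(r)=\ell$ then gives $\Sigma(\mathfrak{p}) = 2\sqrt{N\mathfrak{p}} + O(1)$, whence $c_\mathfrak{p}\sim \tfrac{1}{2\sqrt{N\mathfrak{p}}}$.

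I expect the main obstacle to be the key claim that the periodic weight $w$ has average $1$: this is where the torsion correction $h$ and the Tchebotarev weight $F_M$ must be shown not to interfere, and one has to check that the congruence class picked out by torsion injection is compatible with the distribution of $\trace(\sigma)$ coming from the image of $\rho_M$ — in particular that enlarging $M$ to accommodate $k$ (or arranging $\gcd(M,k)=1$) does not spoil the identity $\frac1M\sum_t F_M(t)=1$ when one restricts to the relevant residue class mod $k$. Everything after that is the routine bounded-variation Riemann-sum estimate sketched above, so I would spend most of the write-up on this compatibility point.
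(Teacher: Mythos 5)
Your analytic skeleton---solving for $c_\mathfrak{p}$ from the normalization, grouping $t$ by residue classes modulo the period of the arithmetic weight, and evaluating the $g$-part as a Riemann sum with $\int_{-1}^{1}\tfrac{2}{\pi}\sqrt{1-\xi^{2}}\,d\xi=1$---is exactly the Lang--Trotter normalization computation that the paper's one-line proof points to (it simply cites~\cite{LT76}), and that part of your write-up is fine. The genuine gap is the step you yourself flag as the main obstacle: the claim that the periodic weight $w(t)=F_M(t)\,h(t,\mathfrak{p},k)$ has mean $1$. Your justification, that the congruence conditions modulo $M$ and modulo $k$ are ``independent'' so that the mean of the product is the product of the means, is a Chinese Remainder argument and is valid only when $\gcd(M,k)=1$. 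That is precisely the case one cannot count on: a rational point of order $l\mid k$ forces the mod-$l$ representation to be non-surjective, so an $M$ chosen to split and stabilize $\rho$ will typically be divisible by such $l$, and then $F_M$ and $h$ are correlated. Concretely, if the torsion group has order $k=2$ and $2\mid M$, then on the image of $\rho_M$ one has $\trace\sigma\equiv 1+\det\sigma\equiv 0\pmod 2$, so $F_M(t)=0$ for every odd $t$; since $N\mathfrak{p}+1$ is even, $h$ selects exactly the support of $F_M$, and the mean of $F_Mh$ over a period is $\frac{2}{M}\sum_{t\ \mathrm{even}}F_M(t)=2$, not $1$. Your own reduction would then give $c_\mathfrak{p}\sim\frac{1}{4\sqrt{N\mathfrak{p}}}$, so the mean-one claim is not a technicality: it is the entire content of the lemma, and it fails as an ``independence'' statement in the very regime where $h$ is a nontrivial correction.

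Note also that the paper treats the interaction between $F_M$ and $h$ in a different way from you: rather than factoring the mean, it observes that at residues $t_0$ where the mod-$M$ and mod-$k$ conditions on $a_\mathfrak{p}$ are incompatible one already has $F_M(t_0)=0$, i.e., the torsion constraint is encoded in the image of $\rho_M$ and the correlation must be confronted, not assumed away. To repair your argument you would either have to impose $\gcd(M,k)=1$ explicitly (in which case your CRT factorization and the Riemann-sum estimate do complete the proof), or compute the mean of $F_Mh$ from the joint structure of $G(M)$ (the relation $\trace\equiv 1+\det$ modulo the common part of $M$ and $k$, with $\det$ equidistributed in $(\Z/M\Z)^{\times}$); the separate facts that $F_M$ and $h$ each have mean $1$ do not suffice.
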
 
\begin{proof}
 We can prove this in a similar manner to the corresponding case in~\cite{LT76}, with the additional observation that when incompatible congruence conditions arise the proof does still follow, due to the fact that for the $t_0$ in question we will have $F(t_0)=0$.
\end{proof} 
Now let $R$ be the set of degree one primes of an abelian extension $L$. If we have incompatibility between the congruence arising from torsion injection and the congruences satisfied by the norms of the elements of $R$, then the set $Q = \{\mathfrak{p} \in R \mid N \mathfrak{p} + 1 \equiv 0 \pmod k \}$ may only have a finite number of elements (as in the example mentioned in the introduction). Thus from here on we only consider the cases where this does not occur (i.e. when $(k,\disc L)=1$).
In this case the set $Q'$ of rational primes lying under those primes in $Q$ will have density $1/nk$, where we recall that $n$ is the degree of the field $L$ and $k$ is the order of the rational torsion group of the elliptic curve. The rest of the analysis of the model follows as in~\cite{LT76}, and we obtain:
\begin{theorem} 
Let $N_{t_0,R}(x)$ be the number of primes $\mathfrak{p} \in R$ such that $N \mathfrak{p} \leq x$ and $t_\mathfrak{p}=t_0$. Then
\begin{align*}
N_{t_0,R}(x) \sim
\frac{2}{\pi} F_M(t_0) \left(\prod_{(l,M)=1} F_l(t_0)\right) k
\sum_{\substack{N\mathfrak{p} \leq x \\ \mathfrak{p} \in R \\ N \mathfrak{p} + 1 \equiv 0 (k)}} \frac{1}{2\sqrt{N\mathfrak{p}}}, 
\end{align*}
where $l$ is a rational prime. When $t_0=0$, we have (given that $M$ splits $\rho$)
\begin{align*}
N_{0,R}(x) &\sim
\frac{2}{\pi} F_M(0) \zeta(2)
\left(\prod_{l|M}(1-1/l^2)\right) k
\sum_{\substack{N\mathfrak{p} \leq x \\ \mathfrak{p} \in R \\ N \mathfrak{p} + 1 \equiv 0 (k)}}
\frac{1}{2\sqrt{N\mathfrak{p}}}.
\end{align*}
\end{theorem}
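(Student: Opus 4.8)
The plan is to model $N_{t_0,R}(x)$ as the sum of the individual ``probabilities'' $\mathrm{prob}(a_\mathfrak{p}=t_0)$ over the good primes $\mathfrak{p}\in R$ with $N\mathfrak{p}\le x$, as dictated by the independence Assumption, and then to evaluate each such probability by letting an auxiliary modulus tend to infinity. First I would dispose of the exact equality: since $t_0$ is fixed, Hasse's bound shows that for every $\mathfrak{p}$ with $N\mathfrak{p}\le x$ and $N\mathfrak{p}$ large one has $|a_\mathfrak{p}-t_0|<M'$ as soon as $M'$ exceeds (roughly) $4\sqrt x$, so that $a_\mathfrak{p}=t_0$ is equivalent to $a_\mathfrak{p}\equiv t_0\pmod{M'}$. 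For such $\mathfrak{p}$ the torsion congruence $a_\mathfrak{p}\equiv N\mathfrak{p}+1\pmod k$ holds automatically by torsion injection, so for $M'$ a suitable multiple of both $M$ and $k$ one may identify $\mathrm{prob}(a_\mathfrak{p}=t_0)$ with $f_{M'}(t_0,\mathfrak{p},k)$. The finitely many primes of bad reduction, together with those too small for torsion injection, contribute $O(1)$ and are discarded.

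Next I would insert the main heuristic assumption $f_{M'}(t_0,\mathfrak{p},k)=c_\mathfrak{p}\,g(\xi(t_0,\mathfrak{p}))\,F_{M'}(t_0)\,h(t_0,\mathfrak{p},k)$ and split the density. Writing $M'=Md$ with $(d,M)=1$, the hypothesis that $M$ splits $\rho$ gives $G(M')\cong G(M)\times\mathrm{GL}_2(\Z/d\Z)$, and the Chinese Remainder Theorem applied to the trace yields $F_{M'}(t_0)=F_M(t_0)\prod_{l\nmid M}F_{l^{a_l}}(t_0)$, where $l^{a_l}\,\|\,d$. Letting $x\to\infty$, so that $d$ runs through integers divisible by every rational prime $l\nmid M$, and using that $M$ stabilizes $\rho$ (so $F_{l^{a}}(t_0)=F_l(t_0)$ for all $a\ge1$ and all $l\nmid M$), the product tends to $\prod_{(l,M)=1}F_l(t_0)$. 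Since $\xi(t_0,\mathfrak{p})=t_0/2\sqrt{N\mathfrak{p}}\to0$ we get $g(\xi(t_0,\mathfrak{p}))\to g(0)=2/\pi$; combining this with $c_\mathfrak{p}\sim 1/2\sqrt{N\mathfrak{p}}$ from the preceding Lemma, and with $h(t_0,\mathfrak{p},k)=k$ precisely when $N\mathfrak{p}+1\equiv t_0\pmod k$ and $0$ otherwise (which is why only the primes with $N\mathfrak{p}+1\equiv 0\pmod k$, a set of positive density when $(k,\disc L)=1$, survive), summation over $\mathfrak{p}\in R$ with $N\mathfrak{p}\le x$ produces the first displayed asymptotic.

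For $t_0=0$ the only additional ingredient is the explicit value of the stabilized local factors. Since $M$ splits $\rho$, the representation $\rho_l$ is surjective for every $l\nmid M$, so a direct count of trace-zero matrices in $\mathrm{GL}_2(\Z/l^a\Z)$ gives $F_l(0)=(1-l^{-2})^{-1}$ for all $a\ge 1$; hence $\prod_{(l,M)=1}F_l(0)=\zeta(2)\prod_{l\mid M}(1-l^{-2})$, and substituting into the first formula produces the second.

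The step needing the most care — and the one where I would defer to the argument of \cite{LT76}, as the authors indicate — is the justification that the limit over the modulus $M'$ may be interchanged with the (truncated, then extended) sum over $\mathfrak{p}$, i.e.\ that the two limiting processes in $x$ and in $M'$ can be carried out in this order without disturbing the asymptotic. The potential obstruction, namely incompatible congruence conditions arising when one passes between the conditions modulo $M'$ and modulo $k$, is harmless because $F_{M'}$ vanishes on the offending residue classes, exactly as observed in the proof of the Lemma on $c_\mathfrak{p}$.
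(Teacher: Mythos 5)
Your proposal is correct and follows essentially the same route as the paper, which itself simply defers "the rest of the analysis of the model" to Lang--Trotter; you have fleshed out exactly the steps that reference supplies (summing the heuristic probabilities, factoring $F_{M'}(t_0)$ via splitting/stabilization, $g(0)=2/\pi$, $c_\mathfrak{p}\sim 1/2\sqrt{N\mathfrak{p}}$, the count $F_l(0)=(1-l^{-2})^{-1}$ for surjective $\rho_l$, and the role of $h$). One cosmetic remark: your derivation correctly yields the congruence $N\mathfrak{p}+1\equiv t_0\pmod k$ in the sum for general $t_0$, which only coincides with the condition $N\mathfrak{p}+1\equiv 0\pmod k$ appearing in the theorem when $t_0=0$ (or $k\mid t_0$) — this is a slip in the paper's statement rather than in your argument.
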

We consider this in the context of rational primes. Let $R'$ be the set of rational primes that lie under the primes in $R$; this has density $1/n$. Thus
\begin{align*}
N_{0,R'}(x) \sim \frac{2}{\pi} F_M(0) \zeta(2)
\left(\prod_{l|M}(1-1/l^2)\right)
\frac{1}{n} \frac{\sqrt{x}}{\log x}.
\end{align*}
\begin{remark}
 Comparing the above with the Lang-Trotter conjecture of 
\begin{align*}
N_{0}(x) \sim 
\frac{2}{\pi} F_M(0) \zeta(2)
\left(\prod_{l|M}(1-1/l^2) \right)
\frac{\sqrt{x}}{\log x},
\end{align*}
 we see that $R'$ has an asymptotic proportion of supersingular primes of $1/n$ (when there is no congruence obstruction). However, later in this paper we shall see that on average there is in fact a bias in the proportion of supersingular primes, reflected in the value of the constant, which is contrary to the expectations of the heuristic.
\end{remark}

\section{Lifting supersingular elliptic curves} \label{s3}
Define $E_{a,b}$ to be the elliptic curve with affine model $Y^2=X^3+aX+b$, with $a,b \in \Z$, and given a set of rational primes $P$, let $\pi_0(P,E_{a,b},x)$ be the number of supersingular primes for $E_{a,b}$ that are elements of $P$ and less than or equal to $x$.

When $P$ is determined by a congruence condition $p \equiv c\pmod m$ with $(c,m)=1$, then we will prove
\begin{theorem}
For $A,B > x^{1/2+\epsilon}, AB > x^{3/2+\epsilon}$ we have
\begin{align*}
\frac{1}{4AB} \sum_{|a| \leq A} \sum_{|b| \leq B} \pi_0 (P,E_{a,b},x)
\sim C_P \frac{\sqrt{x}}{\log x}
\end{align*}
as $x \rightarrow \infty$, for an explicitly determined positive constant $C_P$ that depends only on $P$.
\end{theorem}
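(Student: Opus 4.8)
The plan is to follow the lifting technique of David--Pappalardi~\cite{DP99} and Fouvry--Murty~\cite{FM96}, keeping track throughout of the congruence condition defining $P$. First I would interchange the two sums,
\begin{align*}
\sum_{|a|\le A}\sum_{|b|\le B}\pi_0(P,E_{a,b},x)
&=\sum_{\substack{p\le x\\ p\in P}}\#\bigl\{(a,b)\in\Z^2:|a|\le A,\ |b|\le B,\ E_{a,b}\text{ supersingular at }p\bigr\},
\end{align*}
so that for each prime $p>3$ the inner quantity counts the integer points of the box $[-A,A]\times[-B,B]$ whose reduction mod $p$ lies in the set $S_p\subset(\Z/p\Z)^2$ of residues $(a,b)$ with $4a^3+27b^2\not\equiv0\pmod p$ for which $E_{a,b}$ is supersingular; for $p>3$ this is equivalent to the trace of Frobenius vanishing, hence depends on $a,b\bmod p$ alone. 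Grouped by the value of $j(E_{a,b})$, the set $S_p$ breaks into the loci $j=0$ and $j=1728$ (each with $p-1$ residues, present only for $p\equiv2\pmod3$, resp.\ $p\equiv3\pmod4$) together with, for each supersingular $j$-invariant $j_0\ne0,1728$ lying in $\F_p$, the $p-1$ nonzero points of the cuspidal cubic $C_{j_0}\colon b^2\equiv\kappa(j_0)\,a^3\pmod p$ with $\kappa(j_0)=4(1728-j_0)/(27j_0)$. By Deuring's theory of complex multiplication the number of such $j_0$ is, up to $O(1)$, the Hurwitz--Kronecker class number $H(4p)$; making this precise --- so that the average becomes a sum of Hurwitz numbers --- is the business of Section~\ref{s3}.

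Next comes the lifting. The cubic $C_{j_0}$ is rationally parametrised by $t\mapsto(\kappa^{-1}t^2,\kappa^{-2}t^3)$, so counting its lifts into the box, after completing the interval sums in $a$ and in $b$ by Fourier analysis on $\Z/p\Z$, reduces to complete one-variable exponential sums $\sum_{t\bmod p}e^{2\pi i f(t)/p}$ with $\deg f\le 3$; these are $O(\sqrt p)$ by the Weil bound, and, weighed against the Fourier transforms of the two intervals (whose $\ell^1$ masses are $2A+O(p\log p)$ and $2B+O(p\log p)$), they produce a lift count $\tfrac{(2\lfloor A\rfloor+1)(2\lfloor B\rfloor+1)}{p}$ with an error $O\bigl((A+B+p)p^{-1/2}\log^2 p\bigr)$ per curve; the loci $j=0,1728$ are lifted directly and contribute $O(A+B)$. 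Assembling this over the $H(4p)+O(1)$ curves, and using $H(4p)\ll\sqrt p\log p$, yields
\begin{align*}
\#\bigl\{(a,b)\in\Z^2:|a|\le A,\ |b|\le B,\ E_{a,b}\text{ supersingular at }p\bigr\}
&=\frac{4AB}{p}\,H(4p)+\mathcal{E}_p
\end{align*}
with $\mathcal{E}_p\ll(A+B+p)\log^3 p$ (the exact constant in the main term being fixed in Section~\ref{s3}). Summing over $p\le x$, $p\in P$, and dividing by $4AB$, the two parts of $\tfrac1{4AB}\sum_p\mathcal{E}_p$ --- the one coming from $(A+B)\sum_{p\le x}1$ and the one from $\sum_{p\le x}p$ --- are $o(\sqrt x/\log x)$ exactly when $A,B>x^{1/2+\epsilon}$ and $AB>x^{3/2+\epsilon}$, respectively. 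This reduces the claim to
\begin{align*}
\frac{1}{4AB}\sum_{|a|\le A}\sum_{|b|\le B}\pi_0(P,E_{a,b},x)
&\sim\sum_{\substack{p\le x\\ p\in P}}\frac{H(4p)}{p}.
\end{align*}

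Finally I would evaluate this average of Hurwitz numbers, the content of Section~\ref{s4}. By the Dirichlet class number formula, $H(4p)$ equals $\tfrac{\sqrt p}{\pi}$ times a bounded $\Q$-linear combination of $L(1,\chi_{-4p})$ and, when $p\equiv3\pmod4$, $L(1,\chi_{-p})$. Writing each $L(1,\cdot)=\sum_{n\ge1}\chi(n)/n$, interchanging the sums over $p$ and $n$, and extracting the diagonal contribution of the square values of $n$, one gets a main term of the shape $(\text{const})\sum_{p\le x,\,p\in P}p^{-1/2}$, which by partial summation and the prime number theorem in arithmetic progressions is $\sim C_P\sqrt x/\log x$; the terms with $n$ not a square contribute only to the error, by cancellation in the sums $\sum_{p\le x}\bigl(\tfrac{n}{p}\bigr)p^{-1/2}$. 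The congruence $p\in P$ and the quadratic symbols are untangled by expanding in Dirichlet characters to a modulus dividing the least common multiple of $m$ and $4n$, and the resulting $C_P$ is a convergent product of local densities, positive and depending only on $P$ --- which is also the mechanism behind the bias noted in the introduction.

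I expect the main obstacle to be the uniformity of the error estimate in the second step. There is essentially no slack: $p$ runs up to $x$ while $AB$ can be as small as $x^{3/2+\epsilon}$, so for the larger primes the box does not even contain a full period $p^2$ and crude lattice-point bounds are worthless; the Weil cancellation in the parametrised cubic sums, correctly weighed against the decay of the incomplete-interval Fourier coefficients and then summed over all supersingular $j$-invariants and all $p\le x$, has to carry the whole argument, and one must verify that the total lands below $\sqrt x/\log x$ for precisely the stated ranges of $A$ and $B$. A secondary difficulty, in the analytic step, is tracking how the condition $p\in P$ interacts with the varying conductors $-4p$ and (for $p\equiv3\pmod4$) $-p$, since this is what pins down the value --- and the possible bias --- of $C_P$.
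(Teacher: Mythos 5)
Your proposal follows essentially the same route as the paper: reduce the average (under exactly the stated ranges of $A,B$, with the same split of the error terms) to the sum $\frac{1}{2}\sum_{3<p\le x,\,p\in P} H(-4p)/p$ by counting lifts of supersingular residue classes mod $p$ into the box with square-root cancellation — the paper simply quotes the Fouvry--Murty exponential-sum lemma, which is what your cuspidal-cubic/Weil-bound sketch would reprove — and then evaluate that Hurwitz-number sum via the Dirichlet class number formula, truncation of $L(1,\chi_d)$, and the David--Pappalardi character-sum analysis carrying the condition $p\equiv c \pmod m$, which is Sections~\ref{s3}--\ref{s4}. The one caveat is a factor of $2$ in your intermediate display: for $j_0\neq 0,1728$ each supersingular $j_0\in\F_p$ supports two quadratic twists with $a_p=0$, so the number of such $j_0$ is $\sim H(-4p)/2$ rather than $H(-4p)$; this is consistent with your deferral of the exact constant, but it must be tracked, since the precise value of $C_P$ (and the bias it exhibits) is the point of the theorem.
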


By Deuring, the set of supersingular primes for a CM elliptic curve over $\Q$ has density $1/2$. Thus we must check whether the constant in the asymptotic is affected by the contribution from CM curves.

There are exactly thirteen isomorphism classes of CM elliptic curves over $\Q$. Two of these classes are of the form $E_{a,0}$ and $E_{0,b}$, where $a$ and $b$ can be any non-zero integers, and the other eleven classes are of the form $E_{c_i t^2, d_i t^3}$, where $t$ is any non-zero integer and $(c_i,d_i)$ is, depending on its index $i$, one of eleven distinct pairs of integers.

Therefore
\begin{align*}
& \sum_{|a|\leq A} \sum_{\substack{|b|\leq B \\ E_{a,b} \text{ is CM}}} \pi_0 (P,E_{a,b},x) \\
& \ll  \frac{1}{2}\frac{x}{\log x} 2A 
+ \frac{1}{2}\frac{x}{\log x} 2B
+ \frac{x}{\log x} \cdot \min (A^{1/2}, B^{1/3}) \\
&\ll \frac{x}{\log x} \cdot \max (A,B)
\end{align*}
which we note will not affect the asymptotic expression under the conditions $A,B >  x^{1/2+\epsilon}, AB > x^{3/2+\epsilon}$. We will later choose some examples of abelian extensions and use congruence conditions to determine the set of rational primes lying under the degree one primes of the extension. This will enable us to obtain explicit asymptotic expressions.

Given a prime $p > 3$ and an integer $r \in (-2 \sqrt{p},  2 \sqrt{p})$, the number of isomorphism classes of elliptic curves over $\F_p$ with $a_{p}=r$ is equal to the Hurwitz number 
\begin{align} \label{alpha}
 H(r^2-4p) = 2 \sum_{\substack{f^2|(r^2-4p) \\ d \equiv 0,1(4)}} \frac{h(d)}{w(d)}
\end{align}
where $h(d)$ is the class number of the order $\Z[(d+\sqrt{d})/2]$ of discriminant $d$, $w(d)$ is the number of units in this order, $f$ is an integer and $d=(r^2-4p)/f^2$. We define 
\begin{align*}
& \delta_f(x) = \left\{3 < p \leq x \middle| r^2 - 4p \equiv 0 \pod{f^2} \text{ and } \frac{r^2 - 4p}{f^2} \equiv 0 \text{ or } 1 \pod 4 \right\}.
\end{align*}
Note that the $r=0$ case only allows for two values of $f$, either 1 or 2. Thus $d=-4p$ or $-p$.

From an argument of Fouvry-Murty using exponential sums ~\cite{FM96}, for any $\alpha,\beta \in \{1,\ldots,p-1\}$, we have
\begin{align*}
|\{(a,b) \mid |a|\leq A, |b|\leq B, \text{ and } E_{a,b}\pmod p \text{ is isomorphic to }E_{\alpha, \beta} \}| \\
= \frac{p-1}{2} \cdot \frac{4AB}{p^2}
 + O\Bigg(\sqrt{p} \log^2 p \left(1+\frac{A}{p} + \frac{B}{p}\right) \Bigg) .
\end{align*}
We will sum this over the $H(-4p)-O(1)$ isomorphism classes that have $\alpha \beta \not\equiv 0 \pmod p$. In order to bound the error terms that will arise, we note that $H(-4p) = h(-p) + h(-4p)$, and we recall the Dirichlet class number formula
\begin{align*}
h(d)= \frac{w(d)\sqrt{|d|}}{2 \pi} L(1,\chi_{d})
\end{align*}
when $d \equiv 0\text{ or } 1 \pmod 4$, with $w(d)=6,4,2$ for $d=3,4,\text{ or }\geq 7$, respectively. The Polya-Vinogradov inequality implies that $L(1,\chi_d) \ll \log |d|$ and thus
\begin{align}
\label{wo} H(-4p) \ll \sqrt{p} \log p.
\end{align}
Therefore, we obtain
\begin{align}
\label{ln} \frac{1}{4AB} \sum_{\substack{|a| \leq A \\ |b| \leq B}} \pi_0 (P,E_{a,b},x)
&= \frac{1}{2} \sum_{3 < p \leq x,  p \in P} \frac{H(-4p)}{p} \\  
& + O\Bigg(\frac{x^2 \log^2 x}{AB} + \log \log x + \left(\frac{1}{A} + \frac{1}{B} \right) x \log^2 x \Bigg). \notag
\end{align}

\section{Sums of L-functions} \label{s4}
\subsection{The asymptotic expression}
Using the Dirichlet class number formula in conjunction with~(\ref{alpha}), we obtain:
\begin{align} \label{cross}
\frac{1}{2} \sum_{3 < p \leq x, p \in P}\frac{H(-4p)}{p} = \frac{1}{\pi} \sum_{f=1,2} \frac{1}{f} \sum_{p \in \delta_f (x), p \in P} \frac{L(1,\chi_{d})}{\sqrt{p}}. 
\end{align}
Now we note that the double sum on the right-hand side can be considered as a Stieltjes integral, which gives us
\begin{align} \label{cross star}
&\frac{1}{\sqrt{x}\log x} \sum_{f=1,2} \frac{1}{f} \sum_{\substack{p \in \delta_f (x) \\ p \in P}} L(1,\chi_d) \log p \\
&- \int_{2}^{x}\sum_{f=1,2} \frac{1}{f} \sum_{\substack{p \in \delta_f (t) \\ p \in P}} L(1,\chi_d) \log p \cdot \frac{d}{dt} \left(\frac{1}{\sqrt{t}\log t}\right)\, dt . \notag 
\end{align}
At this point we require:
\begin{theorem}
\begin{align}
 \label{star}
\sum_{f=1,2}\frac{1}{f} \sum_{\substack {p \in \delta_f (x)\\ p \in P}} L(1,\chi_{d}) \log p = K_P x + O\left(\frac{x}{\log^\gamma x}\right)
\end{align}
where $K_P$ is a constant depending only on $P$, and $\gamma$ is some positive constant.
\end{theorem}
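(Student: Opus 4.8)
The plan is to prove~(\ref{star}) by the method of David--Pappalardi and Fouvry--Murty: expand each $L(1,\chi_d)$ into its Dirichlet series, truncate, interchange summations, and extract a main term by quadratic reciprocity together with the prime number theorem in arithmetic progressions. Fix a small $\delta>0$ and put $z=x^{1/2+\delta}$. Writing $L(1,\chi_d)=\sum_{n\le z}\chi_d(n)/n+R_d(z)$, partial summation and the P\'olya--Vinogradov bound $|\sum_{n\le t}\chi_d(n)|\ll\sqrt{|d|}\log|d|$ give $R_d(z)\ll\sqrt{|d|}(\log|d|)/z$; since $|d|=4p/f^2\ll p$, the tails contribute $\ll(\log^2 x)z^{-1}\sum_{p\le x}\sqrt p\ll x^{1-\delta}\log x$ to~(\ref{star}), which is admissible. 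It then remains to treat $\sum_f f^{-1}\sum_{n\le z}n^{-1}\sum_{p\in\delta_f(x),\,p\in P}\chi_d(n)\log p$. Writing $n=n_1n_2^2$ with $n_1$ squarefree, $\chi_d(n)=\chi_d(n_1)\cdot\mathbf{1}_{(n_2,d)=1}$, and quadratic reciprocity rewrites $\chi_d(n_1)=\bigl(\tfrac{d}{n_1}\bigr)$ as $\pm\psi_{n_1}(p)$, where $\psi_{n_1}$ is a real character in the variable $p$ of modulus dividing $4n_1$ whose sign and conductor depend only on $n_1\bmod 4$ and on $f$. Thus, up to an $O(\log n)$ term from the few primes $p\mid 2n_1n_2$, the inner sum equals $\pm\sum_{p\le x,\,p\in P'}\psi_{n_1}(p)\log p$, where $P'$ is $P$ (intersected with $p\equiv 3\pmod 4$ when $f=2$), a union of residue classes modulo a fixed $m'$.

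The main term comes from those $n$ for which $p\mapsto\psi_{n_1}(p)\mathbf{1}_{p\in P'}$, viewed on residues modulo $\operatorname{lcm}(4n_1,m')$, is \emph{principal} — equivalently, every prime dividing $n_1$ divides $m'$. For such $n_1$, $\chi_d(n_1)$ is constant on $P'$, equal to a Kronecker symbol of the fixed residue; summing these $n$ (for $f=1$: $n=n_1n_2^2$ with $n_1$ an odd squarefree divisor of $m'$ and $n_2$ odd, similarly for $f=2$ with the conditions at $2$ adjusted) against $\sum_{p\le x,\,p\in P'}\log p$, evaluated by the prime number theorem for progressions, and completing the $n_2$-sum to infinity (the tail being $O(x/\sqrt z)=O(x^{3/4})$), yields $K_P x$. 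Here $K_P$ is a product: the density of $P'$, times $\prod_{\ell}(1-\ell^{-2})^{-1}$, times $\prod_{\ell\mid m'}\bigl(1+\chi_{P'}(\ell)/\ell\bigr)$, summed over $f$ with the weights $f^{-1}$ and the local factors at $2$ incorporated according to $f$. The factors $1+\chi_{P'}(\ell)/\ell$ at the ramified primes $\ell\mid m'$ are exactly what makes $K_P$ depart from the value predicted by the heuristic of Section~\ref{s1} — this is the bias (and, for $L=\mathbb Q(\sqrt{-3})$, where $m'$ involves $3$, it forces $C_L=\pi/9$ in place of $\pi/3$).

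For the error term, when $n_1$ has a prime factor not dividing $m'$ the character $\psi_{n_1}\mathbf{1}_{P'}$ is nonprincipal of conductor $q_n\asymp n_1$, so with $E(x;q,a):=\sum_{p\le x,\,p\equiv a\,(q)}\log p-\tfrac{x}{\phi(q)}$ one has $\bigl|\sum_{p\le x,\,p\in P'}\psi_{n_1}(p)\log p\bigr|\le\sum_{a\bmod q_n}|E(x;q_n,a)|+O(\log n)$. One estimates $\sum_n n^{-1}$ of this in three ranges: for $n\le(\log x)^A$ by the Siegel--Walfisz theorem; for $(\log x)^A<n\le x^{1/2}/(\log x)^C$ by Bombieri--Vinogradov, using $\sum_{n:\,q_n=q}n^{-1}\ll q^{-1}$ so that the $n$-sum collapses to $\sum_{q\le x^{1/2}/(\log x)^C}\max_a|E(x;q,a)|$; and for $x^{1/2}/(\log x)^C<n\le z$ by a dyadic decomposition, Heath--Brown's quadratic large sieve and Cauchy--Schwarz, giving $\ll N^{-1/4}x^{1+\epsilon}$ for each dyadic block of nonsquare $n$ near $N$, hence $\ll x^{7/8+\epsilon}$ overall, the ``main-type'' $n$ in that last range contributing only $O(x^{3/4})$. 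The contribution of the CM curves $E_{a,b}$ has already been bounded by $\ll(x/\log x)\max(A,B)$ and is negligible.

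The crux is the reconciliation of the two parameters in the error analysis: the P\'olya--Vinogradov tail forces the truncation length $z$ up to about $x^{1/2}$, whereas Bombieri--Vinogradov only controls moduli up to $x^{1/2-\epsilon}$, so the window $x^{1/2}/(\log x)^C<n\le x^{1/2+\delta}$ must be handled by a large-sieve mean-value estimate rather than by pointwise bounds — for moduli this large the individual prime character sums are not known to be $o(x)$, and one can only exploit cancellation on average over $p$. A secondary but essential point is the correct identification of the main term: it is built not merely from the perfect-square $n$ (as a naive reading of the Sato--Tate heuristic would suggest) but from all $n$ whose squarefree part is supported on the primes ramified in the progression, and this must be done with enough uniformity in $n$ that completing the sum does not disturb the error term. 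That refinement is precisely what produces, and explains, the biased constant $K_P$.
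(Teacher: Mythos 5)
Your proposal follows the same David--Pappalardi strategy as the paper's proof: truncate the Dirichlet series for $L(1,\chi_d)$ near $\sqrt{x}$, interchange the sums, extract the main term from those $n$ whose squarefree kernel is supported on the primes of the modulus (the paper's condition that $l_s$ be a square in Lemma~\ref{bu}), and control the remaining $n$ via Siegel--Walfisz, Bombieri--Vinogradov, and a large-sieve mean value for moduli close to $\sqrt{x}$. The only difference is one of emphasis: you supply the analytic error analysis that the paper omits by deferring to~\cite{DP99}, whereas the paper's written proof concentrates on the explicit evaluation of the congruence-restricted Jacobi-symbol sums and the resulting Euler products, which you only sketch; your closed form for $K_P$ is consistent with the paper's (it reproduces $K_P=\zeta(2)/3$, i.e.\ $C_P=\pi/9$, for $p\equiv 1\pmod 3$).
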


\begin{proof}
Since the first part of the proof follows from a straightforward adjustment of the arguments of David-Pappalardi, we omit it here.
We obtain two terms, one for each value of $f$ in the sum above.

 The first term, corresponding to the $f=1$ case, is
\begin{align} \label{20}
& x \sum_{\text{odd }n \leq U}\frac{1}{n} \sum_{\substack{b(n)^* \\ b \equiv -c(k)}} \left(\frac{b}{n}\right) \frac{1}{\varphi (nm/k)} + O\left( \frac{x }{ \log ^\gamma x}\right)
\end{align}
where $k=k(n)$ is the greatest common factor of $n$ and $m$, and $U=\sqrt{x}\log^{\gamma+1}x$ for some $\gamma > 0$.
\begin{lemma}
\label{lem1} For $k=(n,m)$, we have
\begin{align*}
& \varphi \left(\frac{nm}{k}\right) = \frac{\varphi(n)\varphi(m)}{\varphi(k)}.
\end{align*} 
\end{lemma}
\begin{proof}
This can be proved by considering the prime decompositions of $n$, $m$ and $k$.
\end{proof}
We apply this to~(\ref{20}) to get
\begin{align}
 \frac{x}{\varphi(m)} \sum_{\text{odd }n \leq U} \frac{1}{n \varphi(n) / \varphi(k)} \sum_{\substack{b(n)^* \\ b \equiv -c(k)}} \left(\frac{b}{n}\right). \label{ku}
\end{align}
Consider the inner sum in the equation above. Since $k|n$, write $n=lk$ and note that $k|l$ only if $k \nmid \frac{m}{k}$. We have, given the conditions of the sum above, that $\left(\frac{b}{n}\right)
 = \left(\frac{b}{l}\right) \left(\frac{b}{k}\right)
 = \left(\frac{b}{l}\right) \left(\frac{-c}{k} \right)$ and thus we can re-express the inner sum as
\begin{align} \label{10''}
& \left(\frac{-c}{k}\right) \sum_{\substack{b(n)^* \\ b \equiv -c(k)}} \left(\frac{b}{l}\right).
\end{align}

Decompose $l$ into its prime factors as $(p_1^{\alpha_1} \dots p_t^{\alpha_t}) \cdot (p_{t+1}^{\alpha_{t+1}} \dots p_q^{\alpha_q})$, where $p_1, \dots ,p_t|k$ and $p_{t+1}, \dots, p_q \nmid k$ and let $l_k=p_1^{\alpha_1} \dots p_t^{\alpha_t}$, $l_s = p_{t+1}^{\alpha_{t+1}} \dots p_q^{\alpha_q}$. 
Then~(\ref{10''}) becomes
\begin{align*} 
& \left(\frac{-c}{k}\right) \left(\frac{-c}{p_{1}^{\alpha_{1}} \dots p_t^{\alpha_t}}\right)
\sum_{\substack{b(n)^* \\ b \equiv -c(k)}} \left(\frac{b}{l_s}\right)
= \left(\frac{-c}{kl_k}\right) \sum_{\substack{b(n)^* \\ b \equiv -c(k)}} \left(\frac{b}{l_s}\right).
\end{align*}
\begin{lemma} \label{bu}
If $l_s$ is a square, then
\begin{align} \label{wu}
& \sum_{\substack{b(n)^* \\ b \equiv -c(k)}} \left(\frac{b}{l_s}\right)
= \frac{\varphi(n)}{\varphi(k)},
\end{align}
otherwise the sum takes the value zero.
\end{lemma}
\begin{proof}
When $l_s$ is a square, we have 
\begin{align*}
\sum_{\substack{b(n)^* \\ b \equiv -c(k)}} \left(\frac{b}{l_s}\right)
= & \sum_{\substack{b(n)^* \\ b \equiv -c(k)}} 1
= \frac{\varphi(n)}{\varphi(k)}.
\end{align*}
On the other hand, if $l_s$ is not a square, write $l_s = q_1 \dots q_w \cdot h^2$, where the $q_i$ are distinct odd primes ($n$ is odd so $l_s$ is also odd). Then 
\begin{align*}
\sum_{\substack{b(n)^* \\ b \equiv -c(k)}} \left(\frac{b}{l_s}\right)
=& \sum_{\substack{b(n)^* \\ b \equiv -c(k)}} \left(\frac{b}{q_1}\right) \dots \left(\frac{b}{q_w}\right)
 \cdot \left(\frac{b}{h}\right)^2 \\
=& \frac{\varphi(n)}{\varphi(k) \varphi(q_1 \dots q_w)}
 \sum_{\substack{b(q_1 \dots q_w)^*}} \left(\frac{b}{q_1}\right) \dots \left(\frac{b}{q_w}\right)
\end{align*}
which, by the Chinese remainder theorem, gives us 
\begin{align*}
& \frac{\varphi(n)}{\varphi(k) \varphi(q_1 \dots q_w)}
 \left(\sum_{\substack{b(q_1)^*}} \left(\frac{b}{q_1}\right) \right)
 \dots \left(\sum_{\substack{b(q_w)^*}} \left(\frac{b}{q_w}\right) \right) 
\end{align*}
and since $\sum_{\substack{b(q_i)^*}} \left(\frac{b}{q_i}\right) = 0$, the left-hand side of~(\ref{wu}) is zero.
\end{proof}

Applying all this to~(\ref{ku}) we have
\begin{align*}
 \frac{x}{\varphi(m)} \sum_{\text{odd }n \leq U}
 \frac{\sum_{\substack{b(n)^*,b \equiv -c(k)}} \left(\frac{b}{n}\right)} {n \varphi(n) / \varphi(k)} 
=& \frac{x}{\varphi(m)} \sum_{\substack{\text{odd }n \leq U \\ l_s \text{ a square}}}
 \left(\frac{-c}{kl_k}\right) \frac{1}{n}.
\end{align*}
We want to split up this sum according to the different values of $k$. In order to do so, we first need to extend the range of summation:
\begin{align*}
\frac{x}{\varphi(m)} \sum_{\substack{\text{odd }n \leq U \\ l_s \text{ a square}}} \left(\frac{-c}{kl_k}\right) \frac{1}{n}
=& \frac{x}{\varphi(m)} \sum_{\substack{\text{odd }n \geq 1 \\ l_s \text{ a square}}} \left(\frac{-c}{k l_k}\right) \frac{1}{n}
+ O\left(\frac{x}{\varphi (m)} \sum_{\substack{\text{odd }n > U \\ l_s \text{ a square}}} \frac{1}{n}\right) 
\end{align*}
In the context of the error term above we note
\begin{align*}
\frac{x}{\varphi (m)} \sum_{\substack{\text{odd }n > U \\ l_s \text{ a square}}} \frac{1}{n}
\ll& x\sum_{\substack{\text{odd square } n > U}} \frac{1}{n} \\
\ll& \frac{\sqrt{x}}{\log^{\gamma+1}x}.
\end{align*}
Now we can split the main term into a double sum 
\begin{align}
& \frac{x}{\varphi (m)} \sum_{\substack{\text{odd }n \geq 1 \\ l_s \text{ a square}}} \left(\frac{-c}{k l_k}\right) \frac{1}{n}
 = \frac{x}{\varphi (m)} \sum_{\text{odd }k|m} \sum_{\substack{\text{odd }l \geq 1 \\ l_s \text{ a square} \\ (l, m/k)=1}} \left(\frac{-c}{k l_k}\right) \frac{1}{lk} \label{cross''}
\end{align}
where the coprime condition on $l$ arises from the fact that we have $k=(n,m) \Leftrightarrow (n/k,m/k)= 1 \Leftrightarrow (l,m/k)=1$. Note that the first sum on the right-hand side is over all odd divisors of $m$, including $1$ and possibly $m$. 

In order to interpret this, we consider the possible values of the divisors $l_k$ and $l_s$ of $l$ (recall that $l=l_k l_s$). For $l_k$, we note that primes $p$ such that $p|k$ and $p \nmid m/k$ are exactly those primes that might divide $l_k$. Therefore, we have
\begin{align*}
\sum_{l_k} \frac{\left(\frac{-c}{l_k}\right)}{l_k}
=& \prod_{\substack{p|k \\ p \nmid m/k}} \left(  \sum_{w = 1}^{\infty} \frac{\left(\frac{-c}{p}\right)^ w}{p^w} \right) \\
=& \prod_{\substack{p|k \\ p \nmid m/k}} \left(1-\frac{\left(\frac{-c}{p}\right)}{p}\right)^{-1}.
\end{align*}
For $l_s$, recall that it is the largest divisor of $l$ such that $(l_s,k)=1$. We also have that $(l,m/k)=1$ in the second sum of the right-hand side of~(\ref{cross''}), and so $(l_s,m/k)=1$. Thus $(l_s,k \cdot m/k)=1 \Rightarrow (l_s,m)=1$.
So $l_s$ is any square positive integer that satisfies $(l_s,m)=1$ and therefore we can write 
\begin{align*}
& \sum_{\substack{\text{odd }l \geq 1 \\ (l_s,m)=1}} \frac{1}{l_s}
= \zeta(2) \prod_{p|2m} \left(1-\frac{1}{p^2}\right).
\end{align*}

Putting all this together, we conclude that~(\ref{20}) is equal to
\begin{align*}
 \frac{x\zeta(2)}{\varphi(m)} \left(\prod_{p|2m} \left(1-\frac{1}{p^2}\right) \right) \sum_{\text{odd }k|m} \frac{\bigl(\frac{-c}{k}\bigr)}{k} \prod_{\substack{p|k \\ p \nmid m/k}} \left( 1-\frac{\left(\frac{-c}{p} \right)}{p} \right)^{-1}
+ O\left(\frac{x}{\log^\gamma x}\right).
\end{align*}

Now for our second term, corresponding to the $f=2$ case, we have
\begin{align}\label{beta}
x  \sum_{n \leq U} \frac{1}{n} \sum_{\substack{b(4n)^* \\ b \equiv -c'(4k)}} \left(\frac{b}{n}\right) \frac{1}{\varphi(4nm/k)} + O\left( \frac{x}{\log^\gamma} \right),
\end{align}
where $c'$ is an integer such that $ c' \equiv  c \pmod k$ and $ c' \equiv 3 \pmod 4$.
A similar approach as before to the evaluation of this expression gives us, in the case of $m$ odd,
\begin{align*}
 \frac{x\zeta(2)}{4 \varphi(m)} \left(\prod_{p|m} \left(1-\frac{1}{p^2}\right)\right) \sum_{k|m} \frac{\left(\frac{-c}{k}\right)}{k} \prod_{\substack{p|k \\ p \nmid m/k}} \left( 1-\frac{\left(\frac{-c}{p}\right)}{p} \right)^{-1}
+ O\left(\frac{x}{\log^\gamma x}\right),
\end{align*}
and in the case of $m$ divisible by 4,
\begin{align*}
 \frac{x\zeta(2)}{2 \varphi(m)} \left(\prod_{p|m/4} \left(1-\frac{1}{p^2}\right) \right) \sum_{k|m/4} \frac{\left(\frac{-c}{k}\right)}{k} \prod_{\substack{p|k \\ p \nmid m/4k}} \left( 1-\frac{\left(\frac{-c}{p}\right)}{p} \right)^{-1}
+ O\left(\frac{x}{\log^\gamma x}\right). 
\end{align*}

\begin{remark}

In the latter case we have assumed that $4|m$ rather than just $m$ even. This is because if we have a congruence condition $p \equiv c \pmod m$ where $m=2m'$ for $m'$ odd, then it can be decomposed into the conditions $p \equiv c \pmod {m'}$ and $p \equiv 1 \pmod 2$ (since $(c,m)=1$ by assumption). The second condition can then basically be ignored - all it does is exclude the prime 2, and thus does not affect the asymptotic expression. 
\end{remark}
Summing the expressions for the $f=1$ and $f=2$ cases, we obtain
\begin{align*}
\sum_{f=1,2}\frac{1}{f} \sum_{\substack {p \in \delta_f (x)\\ p \in P}} L(1,\chi_{d})\sqrt{p} \log p
= K_P x + O\left(\frac{x}{\log^\gamma x}\right)
\end{align*}
for some constant $K_P$. 
\end{proof}
We apply the above to~(\ref{cross star}) and then substitute back into~(\ref{cross}) to get 
\begin{align} \label{fo}
\frac{1}{2} \sum_{3 < p \leq x} \frac{H(-4p)}{p}
= \frac{2}{\pi} K_P \frac{\sqrt{x}}{\log x}
+ o\left(\frac{\sqrt{x}}{\log x}\right)
\end{align}
and thus we note that $C_P=\frac{2}{\pi}K_P$.

\subsection{The value of $K_P$}
For odd $m$ we have:
\begin{align*}
 K_P=& \frac{\zeta(2)}{\varphi(m)} \left(\prod_{p|m} \left( 1-\frac{1}{p^2}\right) \right) \sum_{k|m}  \left(\frac{-c}{k}\right) \frac{1}{k} \prod_{p|k, p \nmid m/k} \left( 1-\frac{\left(\frac{-c}{p}\right)}{p} \right)^{-1},
\end{align*}
and for even $m$:
\begin{align*}
 K_P=& \frac{\zeta(2)}{\varphi (m)} \left(\prod_{p|m} \left(1-\frac{1}{p^2}\right) \right) \sum_{\text{odd } k|m} \left(\frac{-c}{k}\right) \frac{1}{k}\prod_{p|k, p \nmid m/k} \left( 1-\frac{\left(\frac{-c}{p}\right)}{p} \right)^{-1} \\
+& \frac{I(c)\zeta(2)}{2 \varphi(m)} \left(\prod_{p|m/4} \left(1-\frac{1}{p^2}\right) \right) \sum_{k|m/4} \left(\frac{-c}{k}\right) \frac{1}{k}\prod_{p|k, p \nmid m/4k} \left( 1-\frac{\left(\frac{-c}{p}\right)}{p} \right)^{-1} 
\end{align*}
where $I(c)=1$ if $c$ is congruent to $3 \pmod 4$ and is zero otherwise. 

\section{Applications} \label{s5}
\subsection{Imaginary quadratic fields}
Recall that, by way of example, we mentioned the imaginary quadratic field $\Q(\sqrt{-3})$ in the introduction. The set of rational primes that split in this field is $P=\{p \mid p \equiv 1 \pmod 3 \}$, which gives us the asymptotic 
\begin{align*}
&\frac{1}{4AB} \sum_{|a| \leq A} \sum_{|b| \leq B} \pi_0 \left( \Q\left(\sqrt{-3}\right), E_{a,b}, x \right)
\sim \frac{\pi}{9} \frac{\sqrt{x}}{\log x} 
\end{align*}
for $A,B > x^ {1/2+\epsilon}, AB > x^{3/2+\epsilon}$.

\begin{remark}
 For $P=\{p \text{ prime} \mid p \equiv 2\pmod 3\}$ we have $C_P=2\pi/9$, 
 and thus the occurrence of supersingular primes congruent to $2 \pmod 3$ is significantly
 greater, whereas our heuristic does not distinguish between the constants for the $1\pmod 3$ case and the $2 \pmod 3$ case.
\end{remark}

In general, we have the following constant for an imaginary quadratic field $L=\Q(\sqrt{-q})$, where $q$ is an odd prime: if $q \equiv 3 \pmod 4$, then $C_L = \frac{\pi}{3} \cdot \frac{1}{2} \left(\frac{q-1}{q}\right)$, whereas if $q\equiv 1 \pmod 4$, then $C_L = \frac{\pi}{3} \cdot \frac{1}{2} \left(\frac{q-\frac{1}{4}}{q}\right)$.
So for any imaginary quadratic field there is a bias against the occurrence of supersingular primes that split in the field.

\subsection{Real quadratic fields}

For $L=\Q (\sqrt{q})$, if $q \equiv 3 \pmod 4$, we have $C_L = \frac{\pi}{3} \cdot \frac{1}{2} \left(\frac{q + \frac{1}{4}}{q} \right)$, whereas if $q\equiv 1 \pmod 4$, then $C_L = \frac{\pi}{3} \cdot \frac{1}{2} \left(\frac{q+1}{q} \right)$.
\begin{remark}
 Here we observe a bias toward the occurrence of supersingular split primes. Note that these biases will still be present in the refined averaging of the next section.
\end{remark}
\subsection{Cyclotomic fields}
We could also consider a cyclotomic field such as $\Q(\zeta_{15})$, in which case the set of split primes is $P = \{p \text{ prime} \mid p \equiv 1 \pmod {15}\}$, and so
 \begin{align*}
K_{P} = &\frac{1}{8} \zeta(2) \left(1- \frac{1}{9} \right) \left(1- \frac{1}{25} \right) \left[ 1- \frac{1}{4} + \frac{1}{4} - \frac{1}{16} \right]
= \zeta(2) \frac{1}{10}.
\end{align*}
Thus for $A,B > x^{1+\epsilon}, AB > x^{3/2+\epsilon}$, we have
 \begin{align*}
&\frac{1}{4AB} \sum_{|a| \leq A} \sum_{|b| \leq B} \pi_0 (P,E_{a,b},x)
\sim \frac{\pi}{30} \frac{\sqrt{x}}{\log x}.
\end{align*}

\begin{remark}
The sum of the $C_P$-coefficients for all the various mod 15 congruence relations is $\pi/3$ and thus the mean value is $\pi/24$, which is larger than the coefficient in the asymptotic above. So again we have a bias against the occurrence of supersingular primes that split totally in the number field.
\end{remark}
\begin{remark}
 The bias in the distribution of supersingular primes, as seen in these examples, can be traced back to the $L$-functions of section~\ref{s4}. Consider the inner sum, for $f=1$ say, of the right-hand side of equation~(\ref{cross}) and express this using Euler products to get
\begin{align*}
 & \sum_{\substack{p \in \delta_1 (x)\\ p \in P}} \frac{1}{\sqrt{p}} \cdot \prod_{\text{prime }q} \left( 1-\frac{\left(\frac{-p}{q}\right)}{q} \right)^{-1}.
\end{align*}
Define $Q$ to be the set of (rational) primes congruent to $2 \pmod 3$ and $R$ the set of primes congruent to $1 \pmod 3$. If $P=Q$, the second factor in the Euler product would be $\left(1-\frac{1}{3}\right)^{-1} = 3/2$, whereas if $P=R$, that same factor would be $\left(1+\frac{1}{3}\right)^{-1} = 3/4$. This suggests that choosing a set of primes such as $Q$ leads to a larger constant in the averaging expression. Furthermore, we note that the ratio of the two factors is greater (and thus the bias more pronounced) when $q$ is a smaller prime.
\end{remark}

\section{Refining the sum} \label{s6}

The averaging that we carried out included more than one elliptic curve from each isomorphism class. This repetition can be avoided using a construct of Fouvry-Murty. Define a `set of minimality' $\mathcal{M} = \{(a,b) \in \Z^2 \mid p^2|a \Rightarrow p^3 \nmid b\}$. Then a straightforward extension of a theorem from~\cite{FM96},  under the conditions $A,B > x^{1+\epsilon}$ and $AB > x^{2+\epsilon} \cdot \min (A^{1/4},B^{1/6})$, gives us
\begin{align*}
\sum_{|a| \leq A} \sum_{\substack{|b| \leq B \\ (a,b) \in \mathcal{M}}} \pi_0(P,E_{a,b},x)
=& \frac{2AB}{\zeta(10)} \sum_{\substack{p \leq x \\ p \in P}} \frac{H(-4p)}{p} \Bigl( 1 
+ O\left(p^{-1}\right)
+ O\left(\log^{-4}x\right) \Bigr) \\
&+ O\left(AB \log x\right). 
\end{align*}
We note that the double sum is now no longer over multiple representatives of any given isomorphism class.
Bounding the error terms with~(\ref{wo}), and then applying~(\ref{fo}) we obtain
\begin{theorem}
For $A,B > x^ {1+\epsilon}$ and $AB > x^{2+\epsilon} \cdot \min(A^{1/4},B^{1/6})$,
\begin{align*}
\sum_{|a| \leq A} \sum_{\substack{|b| \leq B \\ (a,b) \in \mathcal{M}}} \pi_0 (P,E_{a,b},x) \sim \frac{4AB}{\zeta(10)}C_P  \frac{\sqrt{x}}{\log x} 
\end{align*}
as $x \rightarrow \infty$.
\end{theorem}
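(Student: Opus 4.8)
The plan is to combine the minimal-pairs counting identity stated just above with the estimates (\ref{wo}) and (\ref{fo}) that are already in hand. Concretely, the quoted extension of the relevant theorem of Fouvry--Murty in \cite{FM96} to the set of minimality $\mathcal{M}$ gives, in the ranges $A,B>x^{1+\epsilon}$ and $AB>x^{2+\epsilon}\cdot\min(A^{1/4},B^{1/6})$,
\[
\sum_{|a|\le A}\sum_{\substack{|b|\le B\\(a,b)\in\mathcal{M}}}\pi_0(P,E_{a,b},x)=\frac{2AB}{\zeta(10)}\sum_{\substack{p\le x\\p\in P}}\frac{H(-4p)}{p}\bigl(1+O(p^{-1})+O(\log^{-4}x)\bigr)+O(AB\log x),
\]
and everything after this is bookkeeping: extract the main term $\frac{2AB}{\zeta(10)}\sum_{p\le x,\,p\in P}\frac{H(-4p)}{p}$ and check that each error contribution is $o\!\left(AB\sqrt{x}/\log x\right)$. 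Note that the hypotheses on $A$ and $B$ are used only to invoke this identity; no further constraint enters the remainder of the argument.

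First I would bound the error terms using $H(-4p)\ll\sqrt{p}\log p$ from (\ref{wo}). The $O(p^{-1})$ factor contributes $\ll AB\sum_{p\le x}\log p\cdot p^{-3/2}\ll AB$; the $O(\log^{-4}x)$ factor contributes $\ll AB\log^{-4}x\sum_{p\le x}\log p\cdot p^{-1/2}\ll AB\sqrt{x}\log^{-3}x$; and the trailing term is $O(AB\log x)$. Each of these is $o\!\left(AB\sqrt{x}/\log x\right)$ and hence negligible against the target. Next I would feed in (\ref{fo}): using $C_P=\frac{2}{\pi}K_P$, that estimate says $\sum_{3<p\le x,\,p\in P}H(-4p)/p=2C_P\,\sqrt{x}/\log x+o\!\left(\sqrt{x}/\log x\right)$, and the two primes $p\le 3$ contribute only $O(1)$. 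Substituting, the main term equals $\frac{2AB}{\zeta(10)}\cdot 2C_P\,\sqrt{x}/\log x+o\!\left(AB\sqrt{x}/\log x\right)=\frac{4AB}{\zeta(10)}C_P\,\sqrt{x}/\log x+o\!\left(AB\sqrt{x}/\log x\right)$, which together with the error bounds of the previous step is precisely the asserted asymptotic.

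The genuine difficulty lies not in this final assembly, which is routine, but in the counting identity we start from: establishing the Fouvry--Murty-type count over $\mathcal{M}$ requires sieving out the non-minimal pairs (those with $p^2\mid a$ and $p^3\mid b$ for some $p$) while retaining the exponential-sum error control, and this is what produces both the factor $1/\zeta(10)$ and the more restrictive ranges for $A$ and $B$. Granting that count, the one point in the assembly that deserves a moment's care is that the $O(\log^{-4}x)$-error of the identity must beat a power of $\log x$: the crude estimate $H(-4p)/p\ll\log p/\sqrt{p}$ overshoots $\sum_{p\le x}H(-4p)/p$ by a factor of order $\log x$ relative to its true size $\asymp\sqrt{x}/\log x$, so a weaker saving in that error term would not obviously suffice, whereas $\log^{-4}x$ is comfortably enough.
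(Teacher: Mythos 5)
Your proposal is correct and follows exactly the paper's (much terser) argument: take the Fouvry--Murty-type count over $\mathcal{M}$ as given, bound the $O(p^{-1})$, $O(\log^{-4}x)$ and $O(AB\log x)$ errors using $H(-4p)\ll\sqrt{p}\log p$ from~(\ref{wo}), and substitute the asymptotic~(\ref{fo}) with $C_P=\tfrac{2}{\pi}K_P$ to get the main term $\tfrac{4AB}{\zeta(10)}C_P\sqrt{x}/\log x$. (Your bound $AB\sqrt{x}\log^{-3}x$ for the second error term is slightly weaker than the true $AB\sqrt{x}\log^{-4}x$, but still $o(AB\sqrt{x}/\log x)$, so nothing is affected.)
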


\subsection*{Acknowledgements}
The author would like to thank his advisor Dinakar Ramakrishnan for his guidance and constructive discussions, as well as Chantal David and Noam Elkies for their helpful suggestions on an earlier version of this paper.

This work was done while the author was a graduate student at Caltech and represents part of his Ph.D. dissertation.

\end{document}